\theoremstyle{plain}
\def\R {\mathbb{R}}
\def\p {\partial}
\newcommand{\mf}[1]{\mathbf{#1}}
\newtheorem{proposition}{Proposition}[section]
\newtheorem{theorem}[proposition]{Theorem}
\newtheorem*{theorem*}{Theorem}
\newtheorem{lemma}[proposition]{Lemma}
\theoremstyle{definition}
\newtheorem{definition}[proposition]{Definition}
\newtheorem{remark}[proposition]{Remark}
\numberwithin{equation}{section}
\title{Reconstruction of unknown cavity by single measurement}
\author[Lin, Nakamura and Wang]{Yi-Hsuan Lin$^{*}$, Gen Nakamura$^{\dagger}$ and Haibing Wang$^{\diamond}$}
\address{$^{*}$Department of Mathematics and Statistics, University of Jyv\"asky\"a, Finland.
	\newline\indent E-mail:{\tt yihsuanlin3@gmail.com}}
\address{$^{\dagger}$Department of Mathematics, Hokkaido University, Sapporo 060-0810, Japan.
	\newline\indent E-mail:{\tt \ nakamuragenn@gmail.com}}
\address{{$^{\diamond}$ School of Mathematics, Southeast University, Nanjing 210096, China.
		\newline
		\indent E-mail:{\tt\  hbwang@seu.edu.cn}}}
\begin{document}

	\maketitle

	\begin{abstract}
		
		In this paper we propose a domain sampling type reconstruction scheme for an inverse boundary value problem to identify an unknown cavity by single measurement on the accessible boundary of a known electric or heat conductive medium. Here the single measurement is to give single current or heat flux which can have a small support over the boundary, and we measure the corresponding voltage or temperature over the whole boundary.
		For this inverse boundary value problem, we adapted the single NRT (no response test) introduced by Luke and Potthast (\cite{Luke}) for inverse scattering problem and show that it can provide such a domain sampling type reconstruction scheme.

		\medskip
		
		\noindent{\bf Keywords}. inverse boundary value problem, no-response test, single measurement, Runge approximation
		\smallskip
		
		\noindent{\bf Mathematics Subject Classification (2010)}: 31B20, 35J15, 65N21
		
	\end{abstract}


	\section{Introduction}\label{Section 1}
	
	
	We will first set up our inverse problem. To begin with let $\Omega \subset \R^n$ for $n=2,3$ be a bounded domain with $C^2$-smooth boundary $\p \Omega$. Physically $\Omega$ is a medium and it can be either homogeneous electric or heat conductive medium with conductivity $1$. Let $D\Subset \Omega$ be a cavity with $C^2$-smooth boundary $\p D$ such that $\Omega \setminus \overline{D}$ is connected. Then the voltage or temperature of electric or heat denoted by $u$ satisfies the following boundary value problem
	\begin{align}\label{Main equation}
	\begin{cases}
	\Delta u =0 & \text{ in }\Omega \setminus \overline{D}, \\
	u=f & \text{ on }\p \Omega, \\
	\p_\nu u=0 & \text{ on }\p D,
	\end{cases}
	\end{align}
	where $\nu $ is a unit normal vector on $\p D$ pointing into $\Omega\setminus \overline{D}$ and $f$ is taken from the $L^2$ based Sobolev space $H^{1/2}(\partial\Omega)$ of order $1/2$ on $\partial\Omega$ which is a specified voltage or temperature at $\partial\Omega$.
	
	It is well known that \eqref{Main equation} is well-posed. That is for any given $f\in H^{1/2}(\partial\Omega)$, there exists a unique solution $u=u_f$ in the $L^2$ based Sobolev space $ H^1(\Omega\setminus\overline D)$ of order $1$ in $\Omega\setminus\overline D$ to \eqref{Main equation} such that
	\begin{equation*}\label{well-posedness}
	\Vert u\Vert_{H^1(\Omega\setminus\overline D)}\le C \Vert f\Vert_{H^{1/2}(\partial\Omega)}
	\end{equation*}
	for some constant $C>0$ which does not depend on $f$ and $u$. Henceforth we call such a $C>0$ general constant, which may differ from place to place, but we will use the same notation $C$.

	Based on this well-posedness, one can calculate the Neumann derivative $\p_\nu u_f$ on $\partial\Omega$ which belongs to the dual space $H^{-1/2}(\partial\Omega)$ of $H^{1/2}(\partial\Omega)$, and this means that we can measure either electric current or heat flux on  $\partial\Omega$. The pair $\left\{f,\left. \partial_\nu u_f\right|_{\partial\Omega}\right\}$ with the unit normal $\nu$ of $\partial\Omega$ directed outside $\Omega$ is called a Cauchy data. Throughout this paper, we assume that the boundary data $f$ on $\p \Omega$ is a \emph{non-constant} function.
	Then our inverse boundary value problem can be stated as follows.
	
	\medskip\noindent
	{\bf Inverse Problem}\newline
	Given a set of Cauchy data $\left\{f,\left. \partial_\nu u_f\right|_{\partial\Omega}\right\}$ taken as our measurement, identify $D$ from this measurement.

	\begin{remark}\label{remark for IP}${}$
		\begin{itemize}
			\item[1.] As described in the abstract, it is physically more natural to give either electric current or heat flux at $\partial\Omega$ as an input. That is to replace $u=f$ at $\partial\Omega$ in \eqref{Main equation} by $\partial_\nu u=f\in H^{-1/2}(\partial\Omega)$ at $\partial\Omega$. In that case we do not have the uniqueness of the corresponding boundary value problem but the solution is unique up to constant. However since we are taking a set of Cauchy data and this specifies the Dirichlet data on  $\partial\Omega$, we have the same situation as in \eqref{Main equation}.
			
			\item[2.] This inverse problem is physically meaningful for the spatial dimensions $n=1,2,3$. But we excluded the case $n=1$, because we want to have geometrically uniform descriptions and this case can be handled much easier. The problem can be considered for more general elliptic equations of divergence form and isotropic static elasticity equation.
			
			\item[3.] In stead of having Neumann boundary condition for $\partial D$, we could have Dirichlet boundary condition for $\partial D$. In this case, $\partial D$ physically means that it is an earthing boundary for an electric conductive medium and a cooling boundary with $0$ relative temperature for a heat conductive medium. In relation with the above item 1, the boundary value problem corresponding to \eqref{Main equation} is uniquely solvable whether we give an inhomogeneous Dirichlet boundary condition or an inhomogeneous Neumann boundary condition with data $f$.
		\end{itemize}
	\end{remark}
	
	The uniqueness of this inverse problem has been already known very early for example from the proof given for the uniqueness of identifying an unknown rigid inclusion inside an isotropic elasticity medium \cite{Ang2}. Also the stability estimate for the identification is known for the conductivity equation \cite{Alessandrini} and even for the isotropic elasticity system \cite{Higashimori,Morassi}. Hence we are particularly interested in giving a reconstruction.
	
	\medskip
	Our main result is the following.
	\begin{theorem}\label{main result} There is a domain sampling type reconstruction method for the aforementioned inverse problem.
		Its details will be given in Section \ref{single NRT}.
	\end{theorem}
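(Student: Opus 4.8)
The assertion is a promissory one --- a reconstruction method \emph{exists}, the method itself being the substance of Section~\ref{single NRT} --- so the task is to exhibit one and verify it. I would use a single no-response test (NRT), built from three pieces: a scalar response functional read off the single Cauchy pair $\{f,\p_\nu u_f|_{\p\Omega}\}$, a countable family of sampling domains, and a dichotomy for the resulting indicator that pins down $D$.

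\textbf{The response functional.} For $v$ harmonic in a neighbourhood of $\overline\Omega$ set
\[
\mathcal R(v):=\int_{\p\Omega}\bigl(v\,\p_\nu u_f-f\,\p_\nu v\bigr)\,dS,
\]
computable from the data alone. Green's identity in $\Omega\setminus\overline D$, with $\Delta u_f=0$ and $\p_\nu u_f|_{\p D}=0$, gives the bulk form $\mathcal R(v)=-\int_{\p D}u_f\,\p_\nu v\,dS$; and if moreover $\Delta v=0$ across a domain $G$ with $\overline D\subset G$, a boundary form $\mathcal R(v)=\int_{\p G}\bigl(u_f\,\p_\nu v-v\,\p_\nu u_f\bigr)\,dS$, where $u_f$ near $\p G$ is the continuation of the Cauchy data into $\Omega\setminus\overline G$ (unique by unique continuation). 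Fix a countable family $\mathcal G$ of $C^2$ domains $G\Subset\Omega$ with connected, hole-free complements, rich enough that every admissible $D$ satisfies $\overline D=\bigcap\{\overline G:G\in\mathcal G,\ \overline D\subset G\}$ (e.g.\ fattened finite unions of rational balls). For $G\in\mathcal G$ and $\eps>0$ put
\[
\mu_\eps(G):=\sup\bigl\{\,|\mathcal R(v)|:\ \Delta v=0\ \text{near}\ \overline\Omega,\ \|v\|_{L^\infty(G)}\le\eps\,\bigr\},
\qquad
\mathcal I(G):=\sup_{0<\eps\le1}\mu_\eps(G)\in[0,\infty],
\]
the supremum over $v$ being realised in practice over a Runge-dense finite-dimensional family of harmonic functions.

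\textbf{The dichotomy.} I claim $\mathcal I(G)<\infty\iff\overline D\subset G$. If $\overline D\subset G$, then $\overline D$ is a compact subset of the interior of $G$, so $\|v\|_{L^\infty(G)}\le\eps$ forces $\|\p_\nu v\|_{L^2(\p D)}\le C(D,G)\eps$ by the interior gradient estimate, and the bulk form together with $\|u_f\|_{H^1(\Omega\setminus\overline D)}\le C\|f\|_{H^{1/2}(\p\Omega)}$ give $|\mathcal R(v)|\le C(D,G)\|f\|_{H^{1/2}(\p\Omega)}\eps$; hence $\mathcal I(G)<\infty$. If instead $\overline D\not\subset G$, pick $x_0\in\p D\setminus\overline G$ and a $C^2$ domain $D'$ with $\overline D\subset D'$ and $\overline{D'}\cap\overline G=\emptyset$. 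Because $f$ is non-constant, $u_f$ is not constant on $\p D$ (otherwise $u_f$ is constant in $\Omega\setminus\overline D$ by unique continuation and $f$ is constant), so the double-layer potential $z\mapsto\int_{\p D}u_f\,\p_\nu\Phi(\cdot-z)\,dS$ (with $\Phi$ the fundamental solution of $-\Delta$) does not vanish identically off $\overline{D'}$ --- else its jump relations would force $u_f|_{\p D}$ constant. Choose $z_0\notin\overline{D'}$ with $\psi:=\Phi(\cdot-z_0)$ satisfying $\int_{\p D}u_f\,\p_\nu\psi\,dS\neq0$, and apply Runge approximation on $\overline G\cup\overline{D'}$ to the harmonic function equal to $0$ near $\overline G$ and to $j\psi$ near $\overline{D'}$: this produces $v_j$ harmonic near $\overline\Omega$ with $\|v_j\|_{L^\infty(G)}\to0$ and $\|v_j-j\psi\|_{C^1(\p D)}\to0$, so $|\mathcal R(v_j)|=\bigl|\int_{\p D}u_f\,\p_\nu v_j\,dS\bigr|\to\infty$ while $\|v_j\|_{L^\infty(G)}\le\eps$ eventually; thus $\mathcal I(G)=\infty$. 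Consequently $\bigcap\{\overline G:G\in\mathcal G,\ \mathcal I(G)<\infty\}=\overline D$, and the scheme reads: sample $G\in\mathcal G$; for each compute $\mu_\eps(G)$ along $\eps\downarrow0$; accept $G$ when it stays bounded; return the intersection of accepted closures, whose interior is $D$.

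\textbf{Main obstacle.} The hard direction is the second one --- promoting ``$\p D$ protrudes from $G$'' to a genuinely unbounded response. Three points require care: (i) running Runge approximation on $\overline G\cup\overline{D'}$ with controlled $C^1$ behaviour near $\p D$, which is exactly where the connectedness/hole-free hypotheses on $\mathcal G$ enter and why one prescribes a function already harmonic across the whole cavity (circumventing the obstruction caused by the cavity's interior); (ii) the non-degeneracy $\int_{\p D}u_f\,\p_\nu\psi\neq0$, for which ``$f$ non-constant'' together with the homogeneous Neumann condition on $\p D$ and unique continuation is precisely what is needed, paralleling the known uniqueness proof for this inverse problem; and (iii) making $\mathcal G$ simultaneously countable/constructive and rich enough that the reconstructed set is exactly $\overline D$, with the $\eps$-truncation in $\mathcal I(G)$ serving as the regulariser that turns the ill-posed continuation behind the boundary form into a stable, finitely computable surrogate for the binary test ``$\overline D\subset G$''.
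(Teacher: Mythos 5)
Your scheme has the same skeleton as the paper's: your response functional $\mathcal R(v)=\int_{\p\Omega}\bigl(v\,\p_\nu u_f-f\,\p_\nu v\bigr)\,dS=-\int_{\p D}u_f\,\p_\nu v\,dS$ is exactly the key identity \eqref{key identity} in a different guise, the indicator, the dichotomy ``bounded iff $\overline D\subset G$'', the intersection over test domains, and the easy direction all match. The genuine gap is in the hard direction. From $\overline D\not\subset G$ you ``pick a $C^2$ domain $D'$ with $\overline D\subset D'$ and $\overline{D'}\cap\overline G=\emptyset$'' --- but such a $D'$ exists only when $\overline D\cap\overline G=\emptyset$. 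The reconstruction formula requires you to reject \emph{every} $G$ that fails to contain $\overline D$, and the decisive ones are precisely those that overlap $D$ but miss a piece of it (this is the generic situation as test domains shrink onto $D$, and it is what determines the boundary of the reconstructed set). For such $G$ your construction cannot even be formulated; moreover, when $G$ covers most of $D$, smallness of $v$ on $G$ genuinely forces $\p_\nu v$ to be small on $\p D\cap G$ by interior estimates, so any blow-up of $\int_{\p D}u_f\,\p_\nu v$ must be manufactured from the protruding patch $\p D\setminus\overline G$ alone. A global prescription of data on all of $\p D$, as in your double-layer/non-degeneracy argument, cannot do this.

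The paper's argument is local exactly for this reason: pick $y_0\in\p D\setminus\overline G$, use unique continuation plus continuity of $u$ up to $\p D$ to get $|u|\ge\delta>0$ on a small patch near $y_0$, and take as test functions the dipoles $F_{\mathbf a}(\cdot,y_j)=(\mathbf a\cdot\nabla_x)E(\cdot,y_j)$ with poles $y_j\to y_0$ along the normal. The kernel $\p_{\nu_x}F_{\mathbf a}(x,y_j)$ is eventually single-signed and non-integrable in the limit on that patch (Fatou), the contribution from the rest of $\p D$ stays bounded, and $\|F_{\mathbf a}(\cdot,y_j)\|_{H^1(G)}$ stays bounded because $y_0\notin\overline G$ --- so after normalizing to satisfy the smallness constraint on $G$ the response still blows up. Runge approximation is then applied to a single function harmonic on a connected set containing $D\cup G$ and avoiding the pole, which also sidesteps your secondary issues (approximating a function defined piecewise on two compacts, whose feasibility needs $\Omega$ minus their union to be connected, and upgrading Runge convergence to $L^\infty(G)$ and $C^1(\p D)$ control). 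In the special case $\overline D\cap\overline G=\emptyset$ your argument is essentially sound, but that case alone does not yield $\cap_{G}\overline G=\overline D$.
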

	
	Our reconstruction method is the single wave no response test adapted to the inverse boundary value problem. The no response test was introduced by Luke-Potthast in \cite{Luke} for the inverse acoustic scattering problem to identify a scatterer such as a sound soft or sound hard obstacle. There are single wave no response test and multiple waves no response test. The corresponding measurements are the far field of the scattered wave generated by one incident plane wave and the scattering amplitude generated by multiple incident waves, respectively. Here it should be remarked that the multiple incident waves mean infinitely many incident waves. The multiple waves no response test can recover the scatterer, but the single wave no response test in general can only recover the scattering support which gives a lower estimate of the scatterer. For further information about the no response test for inverse scattering problems see \cite{Nakamura} and \cite{Potthast}. We will refer the single wave no response test adapted to the inverse boundary value problem by NRT.
	
	It is possible to obtain a result similar to Theorem \ref{main result} for more general equations such as the conductivity equation with anisotropic and heterogeneous conductivity, and also for the static elasticity equation with isotropic and heterogeneous elasticity tensor.
	The same is true for an unknown $D$	with Dirichlet boundary condition at $\partial D$ (see Remark \ref{remark for IP}).
	
	Since there is a huge literature on the reconstruction methods for our inverse problem, we only give some major reconstruction methods by citing one paper which we came across with strong interest. So we ask the readers to consult the literature there in and make further search
	to collect more information about the methods. They are iterative method using the domain derivative \cite{Kress}, topological derivative method \cite{Bonnet}, level set method \cite{Burger} and quasi-reversibility method \cite{Bourgeois}. Let us locate our reconstruction method which we called NRT among the aforementioned reconstruction methods. We can say that our NRT is a  quite simple mathematically rigorous method compared with the other methods. However we haven't studied the convergence of our method for noisy data and its numerical performance.
	We expect that our method will play some role to find a good initial guess for the iterative method such as the regularized least square method and regularized quasi-Newton type method.	
	
	The rest of this paper is organized as follows.
	In Section \ref{single NRT} we first provides some preliminary observation which is useful to introduce NRT. Then we restate our main theorem more precisely in terms of NRT. Section \ref{convergence} is devoted to proving the convergence of NRT.

	\section{NRT and its preliminary observation}\label{single NRT}
	
	In this section we will give Theorem \ref{main result} more precisely in terms of NRT.
	We first give a preliminary observation which can smoothly lead us to introduce NRT adapted to our inverse problem. To begin with let $u$ be the solution to \eqref{Main equation} and $v \in H^1(\Omega)$ be the solution to the boundary value problem
	\begin{align*}
	\Delta v =0\,\,\text{ in }\,\,\Omega,\,\,v=f\,\, \text{ on }\,\,\p\Omega.
	\end{align*}
	Then $w:=u-v\in H^1(\Omega\setminus\overline D)$
	satisfies
	\begin{align}\label{equation w}
	\begin{cases}
	\Delta w =0 & \text{ in }\,\,\Omega\setminus \overline{D},\\
	\p_\nu w =-\p _\nu v &\text{ on }\,\,\p D, \\
	w=0 & \text{ on }\,\,\p \Omega.
	\end{cases}
	\end{align}
	
	Now for $g\in H^{1/2}(\partial\Omega)$, let $z=z_g$ be the solution to the boundary value problem
	\begin{align}\label{equation z_g}
	\Delta z_g =0\,\,\text{ in }\,\,\Omega,\,\,z_g =g\,\,\text{ on }\,\,\p\Omega,
	\end{align}
	then we can prove the following identity.
	
	\begin{lemma}\label{Lemma key identity}
		\begin{align}\label{key identity}
		\int_{\p \Omega }\p_\nu w \cdot g dS =- \int _{\p D} u\cdot \p_\nu z_g dS,
		\end{align}
		where $\nu $ denotes the outer unit normal of $\partial\Omega$ and $\partial D$ directed outside
		$\Omega$ and $D$, respectively.
	\end{lemma}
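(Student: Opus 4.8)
The plan is to prove the identity by integrating by parts (Green's second identity) on the domain $\Omega\setminus\overline D$, using the equations satisfied by $w$ and $z_g$ together with $u=w+v$. First I would note that both $w$ and $z_g$ are harmonic in $\Omega\setminus\overline D$ (the latter is harmonic in all of $\Omega$, hence in particular there), so Green's second identity gives
\begin{align*}
0=\int_{\Omega\setminus\overline D}\left(z_g\,\Delta w - w\,\Delta z_g\right)\,dx
=\int_{\p(\Omega\setminus\overline D)}\left(z_g\,\p_\nu w - w\,\p_\nu z_g\right)\,dS,
\end{align*}
where the boundary $\p(\Omega\setminus\overline D)$ consists of $\p\Omega$ (with outer normal) and $\p D$ (with normal pointing into $D$, i.e. the opposite of the $\nu$ in \eqref{Main equation}). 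One has to be careful with the orientation of $\nu$ on $\p D$ here; I will keep the convention of the lemma statement, where $\nu$ on $\p D$ points outward from $D$, and carry the resulting sign explicitly. Since $w=0$ on $\p\Omega$ and $z_g=g$ on $\p\Omega$, the $\p\Omega$ part collapses to $\int_{\p\Omega}g\,\p_\nu w\,dS$, and we are left with a relation between this quantity and boundary integrals over $\p D$ involving $w$, $\p_\nu w$, $z_g$, $\p_\nu z_g$.

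Next I would eliminate $w$ and $\p_\nu w$ on $\p D$ in favour of $u$ and $v$. From \eqref{equation w} we have $\p_\nu w=-\p_\nu v$ on $\p D$, and $w=u-v$ there; combined with the Neumann condition $\p_\nu u=0$ on $\p D$ from \eqref{Main equation}, the term with $\p_\nu w$ becomes $\int_{\p D}z_g\,\p_\nu v\,dS$ (up to sign) and the term with $w$ splits as $\int_{\p D}(u-v)\,\p_\nu z_g\,dS$. The terms involving $v$ alone, namely $\int_{\p D}\left(z_g\,\p_\nu v - v\,\p_\nu z_g\right)\,dS$, should be disposed of by a second application of Green's identity, this time on the subdomain $D$: both $v$ and $z_g$ are harmonic in all of $\Omega$, hence in $D$, so $\int_{\p D}\left(z_g\,\p_\nu v - v\,\p_\nu z_g\right)\,dS=0$ with the outward normal of $D$. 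What remains is exactly $\int_{\p\Omega}\p_\nu w\cdot g\,dS = -\int_{\p D}u\,\p_\nu z_g\,dS$, which is \eqref{key identity}.

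The only genuine subtlety is regularity: Green's identities as stated require enough smoothness of the functions and their traces on $C^2$ boundaries, whereas a priori $u,v,w\in H^1$ and $z_g\in H^1$ only. I would handle this in the standard way, interpreting all boundary integrals as dual pairings between $H^{1/2}$ and $H^{-1/2}$, and justifying the integration by parts via the weak formulation (the first Green identity / definition of the normal derivative for $H^1$ harmonic functions); interior elliptic regularity makes $u,v,w,z_g$ smooth away from $\p\Omega\cup\p D$, and near $C^2$ boundary pieces the trace theorems suffice. This bookkeeping — keeping the two boundary components and the orientation of $\nu$ on $\p D$ straight, and splitting off the purely-$v$ part to be killed on $D$ — is the main thing to get right; the computation itself is short. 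I expect the orientation of the normal on $\p D$ to be the easiest place to introduce a sign error, so I would fix it once at the start and track it carefully.
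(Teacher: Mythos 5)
Your proposal is correct and follows essentially the same route as the paper: Green's second identity on $\Omega\setminus\overline D$ (the paper writes it as two applications of the first identity), substitution of $w=0$, $z_g=g$ on $\p\Omega$ and $\p_\nu w=-\p_\nu v$, $w=u-v$ on $\p D$, and a second Green identity on $D$ to cancel the pure-$v$ terms. Your extra remarks on the orientation of $\nu$ on $\p D$ and on interpreting the boundary integrals as $H^{1/2}$--$H^{-1/2}$ pairings are sound and only make explicit what the paper leaves implicit.
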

	
	
	\begin{proof}
		By using equations and boundary conditions of $w$ and $z_g$, we have
		\begin{align}\label{cal 1}
		0 = &\notag \int_{\Omega\setminus \overline{D}} \Delta w \cdot z_g dx=\int_{\p (\Omega \setminus \overline{D})}\p_\nu w \cdot z_g dS -\int_{\Omega \setminus\overline{D}}\nabla w \cdot \nabla z_g dx\\
		= & \notag \int_{\p \Omega }\p _\nu w \cdot z_g dS-\int _{\p D} \p_\nu w \cdot z_g dS -\int_{\p(\Omega \setminus\overline{D})}w \cdot \p_\nu z_g dS\\
		= & \int_{\p \Omega }\p _\nu w \cdot z_g dS+\int _{\p D} \p_\nu v \cdot z_g dS+\int_{\p D} (u-v) \cdot \p_\nu z_g dS
		\end{align}
		where $\nu $ for the integrals on $\partial(\Omega\setminus\overline D)$  is the outer unit normal of $\p (\Omega\setminus\overline D)$. Note that by the Green formula we have
		\begin{align}\label{cal 2}
		\int_{\p D}v \cdot \p_\nu z_g dS -\int _{\p D}\p _\nu v\cdot z_g dS=-\int _D \left(v\Delta z_g-z_g \Delta v\right)dx =0.
		\end{align}
		Then combining \eqref{cal 1} with \eqref{cal 2}, we immediately have \eqref{key identity}.

	\end{proof}
	
	Next we introduce test domains and an indicator function as follows.
	
	\begin{definition} We call any subdomain $G\Subset\Omega$ a test domain if it satisfies the condition that $\Omega\setminus\overline G$ is connected. Then, for any test domain $G$ and $\epsilon>0$, we define $I_\epsilon(G)$ by
		\begin{align*}
		I_\epsilon (G) := \displaystyle\sup_g\left\{\left|\int_{\p \Omega}\p_\nu w \cdot g dS\right|:g\in H^{1/2}(\partial\Omega)\,\,\text{such that}\,\,\|z_g\|_{H^1(G)}<\epsilon \right\},
		\end{align*}
		where $w$ and $z_g$ are the solutions of \eqref{equation w} and \eqref{equation z_g}, respectively. Clearly $I_\epsilon(G)$ is non-negative and monotone decreasing as $\epsilon\searrow0$. Based on this we define the domain sampling indicator function $I(G)$ for a test domain $G$ by
		\begin{align*}
		I(G):=\lim _{\epsilon\searrow0}I_\epsilon (G).
		\end{align*}
	\end{definition}
	
	By using this indicator function, we classify test domains as follows.
	
	\begin{definition}
		A test domain $G$ is called \emph{positive} or \emph{no-response} if $I(G)=0$.
		Further we denote the set of all positive test domain by $\mathcal{P}$. That is
		$\mathcal{P}:=\{G: \ G \text{ is a positive test domain}\}.$
	\end{definition}
	
	\begin{remark}${}$
		The meaning of no-response is as follows. If we mask the cavity $D$ by a test domain $G$ i.e. $\overline D\subset\overline G$, then we cannot have any response i.e. $I(G)=0$ (see the proof of next Theorem \ref{Thm Main} in the next section).

	\end{remark}
	
	Now we are able to state our main result in terms of NRT as follows.
	
	\begin{theorem}[reconstruction formula]\label{Thm Main}
		The cavity $D$ can be reconstructed as
		\begin{align}\label{Reconstruction formula}
		\cap_{G\in\mathcal{P}}\ \overline G =\overline{D}.
		\end{align}
	\end{theorem}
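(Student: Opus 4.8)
The plan is to prove the two inclusions $\overline D\subset\bigcap_{G\in\mathcal P}\overline G$ and $\bigcap_{G\in\mathcal P}\overline G\subset\overline D$ separately, the first being the ``easy'' direction that legitimizes the name \emph{no-response}, and the second relying on a Runge approximation argument together with the assumption that $f$ is non-constant.

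\medskip\noindent\textbf{Step 1 (masking domains are positive).} First I would show that if $G$ is a test domain with $\overline D\subset G$, then $I(G)=0$, so that $\overline D\subset\overline G$ for every such $G$; intersecting over this subfamily already gives $\bigcap_{G\in\mathcal P}\overline G\subset\overline D$ is what we want for the \emph{hard} direction, so let me be careful: actually Step 1 gives that every $G\supset\overline D$ lies in $\mathcal P$, hence $\bigcap_{G\in\mathcal P}\overline G\subset\bigcap_{G\supset\overline D}\overline G=\overline D$. To see $I(G)=0$: by Lemma \ref{Lemma key identity}, $\int_{\p\Omega}\p_\nu w\cdot g\,dS=-\int_{\p D}u\cdot\p_\nu z_g\,dS$. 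Since $\p D\subset G$ and $z_g$ is harmonic, interior elliptic estimates bound $\|z_g\|_{H^{3/2}(\text{nbhd of }\p D)}$, hence $\|\p_\nu z_g\|_{L^2(\p D)}$, by $C\|z_g\|_{H^1(G)}$ (using that $\p D$ is compactly contained in $G$ with some fixed collar). Therefore $|\int_{\p\Omega}\p_\nu w\cdot g\,dS|\le C\|u\|_{L^2(\p D)}\|z_g\|_{H^1(G)}\le C'\,\epsilon$ whenever $\|z_g\|_{H^1(G)}<\epsilon$, so $I_\epsilon(G)\le C'\epsilon\to0$.

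\medskip\noindent\textbf{Step 2 (domains not covering $D$ are not positive).} For the reverse inclusion $\overline D\subset\bigcap_{G\in\mathcal P}\overline G$, I must show: if $G$ is a test domain with $\overline D\not\subset\overline G$, then $I(G)>0$, i.e. $G\notin\mathcal P$. Equivalently, I need to produce a sequence $g_k\in H^{1/2}(\p\Omega)$ with $\|z_{g_k}\|_{H^1(G)}\to0$ but $|\int_{\p D}u\cdot\p_\nu z_{g_k}\,dS|\not\to0$. The idea is Runge approximation: since $\Omega\setminus\overline G$ is connected and $\p D\setminus\overline G$ is nonempty, pick a small ball $B$ with $\overline B\subset\Omega\setminus\overline G$ touching (or near) $\p D$ at a point $x_0\notin\overline G$, and let $\Phi$ be a suitable harmonic function singular inside $B$ (e.g. the fundamental solution with pole in $B$, or a dipole/derivative of it) chosen so that $\int_{\p D}u\cdot\p_\nu\Phi\,dS\neq0$ — this is where non-constancy of $f$, hence of $u$, enters, guaranteeing $u\not\equiv$ const on $\p D$ and allowing a choice of singularity making the pairing nonzero. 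By the Runge approximation property for the Laplacian (valid because $\Omega\setminus(\overline G\cup\overline B)$ is connected up to the boundary $\p\Omega$), there exist harmonic functions $z_{g_k}$ on $\Omega$ with $z_{g_k}\to\Phi$ in $H^1$ on a neighborhood of $\overline G$ and on a neighborhood of $\p D$ simultaneously — more precisely, approximate $\Phi$ in $H^1(\mathcal O)$ where $\mathcal O$ is a connected open set containing $\overline G$ and $\p D$ but not the pole. Then $\|z_{g_k}\|_{H^1(G)}\to\|\Phi\|_{H^1(G)}$; to force this $H^1(G)$-norm to be small I rescale: since $\Phi$ is harmonic on $\Omega\setminus B$ and I may shrink $B$ and move the pole, I instead fix a \emph{nonzero} target value of the pairing and, using linearity, normalize $g_k$ so that $\int_{\p D}u\cdot\p_\nu z_{g_k}\,dS=1$ while checking $\|z_{g_k}\|_{H^1(G)}$ stays bounded — this shows $I_\epsilon(G)\ge$ const $>0$ for all $\epsilon$ larger than that bound; to reach \emph{all} $\epsilon>0$ one exploits that the pole can be pushed toward $x_0\in\p D\setminus\overline G$, making $\|\Phi\|_{H^1(G)}$ large relative to the pairing, so after normalization $\|z_{g_k}\|_{H^1(G)}\to0$ while the pairing stays $\ge$ const.

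\medskip\noindent\textbf{Main obstacle.} The delicate point is Step 2: simultaneously controlling $z_{g}$ to be \emph{small} on $G$ yet to have a \emph{nonvanishing} pairing against $u$ on $\p D$. This requires (i) the correct Runge approximation statement — density of traces on $\p\Omega$ of global harmonic functions in the space of harmonic functions on a connected neighborhood of $\overline G\cup\p D$ avoiding a pole, which needs $\Omega\setminus\overline G$ connected (given) and the complement of the approximation set connected to $\p\Omega$; and (ii) a quantitative mechanism, namely letting the singularity of $\Phi$ approach the point $x_0\in\p D\setminus\overline G$ so that the ratio $\|\Phi\|_{H^1(G)}/|\int_{\p D}u\,\p_\nu\Phi\,dS|\to0$, which in turn needs the unique continuation fact that $u\not\equiv$ const near $x_0$ on $\p D$ — precisely where the standing hypothesis ``$f$ non-constant'' is used. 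I would isolate (i) as a Runge-type lemma and (ii) as a pole-placement lemma, then assemble them to conclude $I(G)>0$, completing the proof of \eqref{Reconstruction formula}.
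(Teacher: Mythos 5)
Your overall architecture coincides with the paper's: the easy inclusion via the key identity of Lemma \ref{Lemma key identity} plus an interior/trace estimate bounding $\partial_\nu z_g$ on $\partial D$ by $\|z_g\|_{H^1(G)}$, and the hard inclusion via a singular solution near a point $y_0\in\partial D\setminus\overline G$, Runge approximation on a set containing $\overline{D\cup G}$ but not the pole, and unique continuation to guarantee $u$ does not vanish on an open piece of $\partial D$. Step 1 is essentially the paper's argument and is fine.

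In Step 2, however, there is a genuine gap: the entire burden of the proof sits in your unproven ``pole-placement lemma,'' and the mechanism you sketch for it is partly wrong. First, the choice of singularity matters. If $\Phi=E(\cdot,y)$ is the plain fundamental solution, then $\int_{\partial D}u\,\partial_{\nu_x}E(x,y)\,dS(x)$ is a double-layer potential of $u$ evaluated at $y$, which stays \emph{bounded} as $y\to y_0\in\partial D$ (classical jump relations); the pairing does not blow up and the ratio $\|\Phi\|_{H^1(G)}/|\int_{\partial D}u\,\partial_\nu\Phi\,dS|$ does not tend to $0$. The paper must take the normal dipole $F_{\mathbf{a}}=(\mathbf{a}\cdot\nabla_x)E$ with $\mathbf{a}=\nu_{y_0}$, for which $\partial_{\nu_x}F_{\mathbf{a}}(x,y)$ has leading term $-\mathbf{a}\cdot\nu_x/(4\pi|x-y|^3)$ of fixed sign and non-integrable strength on the surface patch; combined with the $C^2$-boundary estimates $|\nu_{y_0}\cdot(x-y_0)|\le L|x-y_0|^2$, $|\nu_x-\nu_{y_0}|\le L|x-y_0|$ (to control the second term of the kernel), the lower bound $|u|\ge\delta$ on $N_{y_0}\cap\partial D$ (from unique continuation and continuity of $u$ up to $\partial D$ — this, not an ad hoc choice of $\Phi$ making the pairing nonzero, is where non-constancy of $f$ enters), and Fatou's lemma, this yields $\liminf_{y\to y_0}\bigl|\int_{\partial D}u\,\partial_{\nu_x}F_{\mathbf{a}}(x,y)\,dS(x)\bigr|=\infty$. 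Second, your quantitative statement is inverted in one place: since $y\to y_0$ with $\overline{N_{y_0}}\cap\overline G=\emptyset$, the norm $\|\Phi\|_{H^1(G)}$ stays \emph{bounded}; the point is not to make it ``large relative to the pairing'' but that the pairing diverges while the $H^1(G)$-norm does not, so after the normalization $\mathcal G_j=\frac{\epsilon}{2}(\|F_{\mathbf{a}}(\cdot,y_j)\|_{H^1(G)}+1)^{-1}F_{\mathbf{a}}(\cdot,y_j)$ one still gets $I_\epsilon(G)=\infty$ for every $\epsilon$. Without the dipole computation, the sign analysis precluding cancellation on $N_{y_0}\cap\partial D$, and the boundedness of the contribution from $\partial D\setminus N_{y_0}$, the proof is not complete.
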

	
	The proof of this theorem will be given in the next section. Before closing this section, we give a modified version of Theorem \ref{Thm Main}.
	
	\begin{remark}\label{modified reconstuction formula}
		Since for any $\epsilon>0$,
		\begin{align*}
		&\notag \quad g\in H^{1/2}(\partial\Omega),\,\,\Vert z_g\Vert_{H^1(G)}<\epsilon
		\\
		\Longleftrightarrow & \quad \epsilon^{-1}g\in H^{1/2}(\p \Omega),\,\,\Vert z_{\epsilon^{-1}g}\Vert_{H^1(G)}<1,
		\end{align*}
		we can just take $I_1(G)$ as an indicator function for a test domain $G$. That is
		\begin{equation*}
		\displaystyle\cap_{G\in\mathcal{Q}}\,\overline{G}=\overline D,
		\end{equation*}
		where $\mathcal{Q}=\{G\,:\,\text{test domain}\,\,I_1(G)<\infty\}.$
	\end{remark}

	\section{Convergence proof of NRT}\label{convergence}
	We will give a proof of Theorem \ref{Thm Main} without passing through Remark \ref{modified reconstuction formula}. In order to prove \eqref{Reconstruction formula}, it suffices to show that
	\begin{itemize}
		\item[1.] $\overline{D}\subset\overline G$ implies that $I(G)=0$.
		\item[2.] $\overline{D}\not \subset G$ implies that $I(G)=\infty$.
	\end{itemize}
	In fact by the definition of $\mathcal{P}$, if these statements hold, then we have
	\begin{itemize}
		\item[1'.] $\overline D\subset\overline G$ implies $G\in\mathcal P$.
		\item[2'.] $\overline D\not\subset\overline G$ implies $G\not\in\mathcal{P}$.
	\end{itemize}
	
	First, we show the statement 1. Suppose we have $\overline{D}\subset\overline G$. For any $\epsilon>0$, let $g\in H^1(\Omega)$ satisfy $\|z_g\|_{H^1(G)}<\epsilon$. By \eqref{key identity} and $\Vert\partial_\nu z_g\Vert_{H^{-1/2}(\partial D)}\le C\Vert z_g\Vert_{H^1(G)}$ with a general constant $C>0$, we have
	$$
	\left|	\int_{\p \Omega }\p_\nu w \cdot g dS \right|= \left|\int _{\p D} u\cdot \p_\nu z_g dS\right|\leq C\|u\|_{H^{1/2}(\p D)}\|z_g \|_{H^1(G)}\leq C\epsilon
	$$
	for a general constant $C>0$. Hence $I_\epsilon (G)
	\leq C\epsilon$ so that $I(G)=\lim_{\epsilon\searrow 0}I_\epsilon (G) =0$.
	
	Next we show the statement 2. Since the proof for the case $n=2$ is almost the same, we confine to the case $n=3$. Suppose we have $\overline{D}\not \subset\overline G$.
	Then there exists a point $y_0\in\partial D\setminus\overline G$ and a small and narrow cylinder like open neighborhood $N_{y_0}$ of $y_0$ sitting on $\partial D$ with symmetric axis $\nu_{y_0}$ and a flat top surface such that $\overline{N_{y_0}}\cap\overline G=\emptyset$, where $\nu_{y_0}$ is the outer unit normal $\nu_{y_0}$ directed outside $D$. Since $\partial_\nu u=0$
	on $\partial D$, $u=f\in H^{1/2}(\partial\Omega)$ on $\partial \Omega$ non-constant $f\in H^{1/2}(\partial\Omega)$ and $\Omega\setminus\overline D$ is connected, $u$ cannot vanish in any open subset of $\partial D$ due to the unique continuation property for solutions of Laplace equation and the regularity up to $\partial D$ of $u$ giving $u\in C^0(\partial D)$. Hence we can assume that $|u(x)|\ge\delta,\,y\in N_{y_0}\cap\partial D$ for some constant $\delta>0$.
	
	To proceed further we will consider a singular solution of the Laplace equation. For example let $E(x,y)$ be the fundamental solution $E(x,y):=(4\pi|x-y|)^{-1},\,x\not=y$ of $-\Delta$ in $\R^3$ and for any fixed $\mf{a}\in \R^3$, we take our singular solution of the Laplace equation as
	\begin{align*}
	F_{\mathbf{a}}(x,y):=(\mathbf{a}\cdot\nabla_x)E(x,y)=-\dfrac{(x-y)\cdot \mathbf{a}}{4\pi|x-y|^3},\ x\not=y \text{ with }\,\mf{a}=\nu_{y_0}.
	\end{align*}
	and estimate $\liminf_{y\to y_0}\left|\int_{\partial D}u(x)\cdot\p_{\nu_x}F_{\mf{a}}(x,y)dx\right|$ from below, where $y$ moves along the axis of the cylinder $N_{y_0}$. It is easy to see that
	\begin{equation}\label{bounded}
	\left|\int _{\p D \setminus N_{y_0}}u(x)\cdot \p _{\nu_x}F_{\mf{a}}(x,y)dx\right|\,\,\text{is bounded}
	\end{equation}
	as $y\rightarrow y_0$ along the axis of the cylinder $N_{y_0}$. By a direct computation we have
	\begin{equation}\label{integrand}
	4\pi\p_{\nu_x}F_\mf{a}(x,y)=-\dfrac{\mf{a}\cdot \nu_x}{|x-y|^3}+3\dfrac{[(x-y)\cdot\nu_x][(x-y)\cdot \mf{a}]}{|x-y|^5}.
	\end{equation}
	Write
	\begin{equation}\label{vector identity}
	\begin{array}{ll}
	(x-y)\cdot \nu_x = (x-y_0)\cdot \nu_{y_0}-(y-y_0)\cdot \nu _{y_0} +(\nu_x-\nu_{y_0})\cdot (x-y)\\
	(x-y)\cdot\mf{a}=(x-y_0)\cdot\nu_{y_0}-(y-y_0)\cdot\nu_{y_0},
	\end{array}
	\end{equation}
	where $x,y_0 \in \p D$ and $\nu _{y_0}$ is a unit normal vector pointing to $\Omega\setminus \overline{D}$. Further recall the following estimates:
	\begin{align*}
	|\nu_{y_0} \cdot (x-y_0)|\leq L|x-y_0|^2 \quad \text{ and }\quad |\nu_x - \nu_{y_0}|\leq L|x-y_0|,
	\end{align*}
	with some constant $L>0$ for any $x,y_0\in \p D$ (see \cite[Theorem 2.2]{colton2013integral}). Hence the second term of the right hand side of
	\eqref{integrand} is small compared with the first term of the right hand side of \eqref{integrand} for $x\in N_{y_0}\cap\partial D$ and $y\rightarrow y_0$ along the axis of the cone $N_{y_0}$. Then taking into account that the first term of the right hand side of \eqref{integrand} is negative for $x\in N_{y_0}\cap\partial D$ and $y\rightarrow y_0$ along the axis of the cylinder $N_{y_0}$, we have
	\begin{align}\label{liminf 1}
	\liminf_{y\to y_0}\left|\int_{N_{y_0}}u(x)\cdot\p_{\nu_x}F_{\mf{a}}(x,y)dx\right|\geq \int_{N_{y_0}}|u(x)|\liminf_{y\to y_0}|\p_{\nu_x}\cdot F_{\mf{a}}(x,y)|dx=\infty
	\end{align}
	by Fatou's lemma.
	Therefore combining \eqref{bounded} and \eqref{liminf 1}, we have obtained the following observation:
	\begin{equation}\label{liminf}
	\liminf_{y\to y_0}\left|\int_{\partial D}u(x)\cdot\p_{\nu_x}F_{\mf{a}}(x,y)dx\right|=\infty\,,
	\end{equation}
	where $y$ moves along the axis of the cylinder $N_{y_0}$.
	
	Now based on this observation, for any fixed $\epsilon>0$, we want to find a sequence $g_j\in H^1(\partial\Omega),\,j=1,2,\cdots$ such that
	\begin{equation*}
	\Vert z_{g_j}\Vert_{H^1(G)}<\epsilon,\,\,
	\displaystyle\lim_{j\rightarrow\infty}\left|
	\int_{\partial\Omega}\partial_\nu w\cdot g_j\,dx\right|=\displaystyle\liminf_{j\rightarrow\infty}\left|\int_{\partial D} u\cdot\partial_\nu z_{g_j}\,dx\right|=\infty,
	\end{equation*}
	which immediately implies $I(G)=\infty$.
	
	Let $y_j,\,j=1,2,\cdots$ be a sequence along the axis of $N_{y_0}$ such that $y_j\rightarrow y_0$ as $j\rightarrow\infty$. Further let $A_j$ be a domain such that $D\cup G\Subset A_j\Subset \Omega$ and $y_j\notin A_j$.
	Then consider the sequence of functions
	$$
	\mathcal{G}_j(x):= \dfrac{\epsilon}{2}(\|F_{\mf{a}}(\cdot,y_j)\|_{H^1(G)}+1)^{-1}F_{\mf{a}}(x,y_j),\,\,j=1,2,\cdots.
	$$
	Note that each $\mathcal{G}_j\in H^1(A_j)$ and it satisfies $\Delta\mathcal{G}_j=0$ in $A_j$. By the Runge approximation property for the solutions of Laplace equation (see \cite{lax}), we can approximate each $\mathcal{G}_j$ in $A_j$ by a function $U_j\in H^1(\Omega)$ such that $\Delta U_j =0$ in $\Omega$. Let $\epsilon/4$ be the discrepancy of approximation so that we have
	$\left\|U_j -\mathcal{G}_j \right\|_{H^1(\overline{D\cup G})}<\dfrac{\epsilon}{4}$.
	
	Now take $g_j=U_j\big|_{\p \Omega}$ and denote $z_{g_j}:= U_j$ in $\Omega$. Then $g_j\in H^1(\Omega)$ and it satisfies $\Delta z_{g_j}=0$ in $\Omega$, $z_{g_j}=g_j$ on $\partial\Omega$. Observe that we have
	$$
	\|z_{g_j} \|_{H^1(G)}\leq \|z_{g_j}-\mathcal{G}_j\|_{H^1(G)}+\|\mathcal{G}_j\|_{H^1(G)}<\dfrac{\epsilon}{4}+\dfrac{\epsilon}{2}<\epsilon,
	$$
	and
	\begin{align}\label{liminf 2}
	\left|\int_{\partial D}u\cdot \p_{\nu} z_{g_j}dS\right|\geq &\notag \left|\int_{\partial D}u\cdot \p_{\nu} \mathcal{G}_j dS\right|-\int_{\partial D}|u| \left|\p_{\nu}\left(z_{g_j}-\mathcal{G}_j\right) \right|dS \\
	\geq & \dfrac{\epsilon}{2}(\left\|F_{\mf{a}}(\cdot,y_j)\right\|_{H^1(G)}+1)^{-1}\left|\int_{\partial D}u\cdot \p_{\nu}F_{\mf{a}}(\cdot,y_j)dS\right|-C\epsilon,
	\end{align}
	for some general constant $C>0$. Here note that $\Vert F_{\mf{a}}(\cdot,y_j)\Vert_{H^1(G)}$ is bounded as $j\rightarrow\infty$. Hence by taking $\displaystyle\liminf_{j\rightarrow\infty}$ on the both sides of \eqref{liminf 2}, we immediately have
	$$
	\liminf_{j\to\infty}\left|\int_{\partial D}u\cdot \p_{\nu} z_{g_j}dS\right|=\infty
	$$
	from \eqref{liminf}.
	This completes the proof.
	
	\begin{remark}${}$
		\newline
		In the proof of Theorem \ref{main result} we have only used the following ingredients.
		\begin{itemize}
			\item [1.]
			The Green formula to derive the key identity \eqref{key identity}.
			\item[2.] The unique continuation property to say that the solution giving the measurement doesn't vanish in an open subset of the boundary of an unknown cavity.
			\item[3.] The Runge approximation of some singular solution for showing the blow up of the indicator function. Here the Runge approximation is coming from the unique continuation property.
		\end{itemize}
		Hence a result similar to the theorem can be obtained for more general equations and systems as far as we can have these ingredients.
		
	\end{remark}		
	
	\section*{Acknowledgment}
	
	The first author acknowledges the supports from the Finnish Centre of Excellence in Inverse Modelling and Imaging (Academy of Finland grant 284715) and the Academy of Finland project number 309963. The second author acknowledges the supports from Grant-in-Aid for Scientific Research 15K21766 and 15H05740 of JSPS. The third author is supported by National Natural Science Foundation of China (No. 11671082) and Qing Lan Project of Jiangsu Province.

	\bibliographystyle{abbrv}
	\bibliography{ref}

\end{document}